\newtheorem{theorem}{Theorem}[section]
\newtheorem{lemma}[theorem]{Lemma}
\newtheorem{conj}[theorem]{Conjecture}
\newtheorem{proposition}[theorem]{Proposition}
\newtheorem{corollary}[theorem]{Corollary}
\title{Packing $(1,1,2,2)$-coloring of some subcubic graphs}
\author{Runrun Liu$^{1}$, Xujun Liu$^{2}$,  Martin Rolek$^{3}$,    Gexin Yu$^{3}$}
\address{
$^{1}$\small School of Mathematics and Statistics, Central China Normal University, Wuhan, Hubei, China.\\
$^2$\small Department of Mathematics, University of Illinois, Urbana, IL, 61801, USA\\
$^3$\small Department of Mathematics, William \& Mary, Williamsburg, VA, 23185, USA.
}
\thanks{The work is done while the first author was at William \& Mary as a visiting student,  supported by the Chinese Scholarship Council.  The work of the second author is supported by the Waldemar J., Barbara G., and Juliette Alexandra Trjitzinsky Fellowship. The research of the last author was supported in part by the Natural Science Foundation of China (11728102).}
\email{827261672@qq.com (R. Liu), xliu150@illinois.edu, msrolek@wm.edu, gyu@wm.edu}
\begin{document}
\maketitle

\begin{abstract}
For a sequence of non-decreasing positive integers $S = (s_1, \ldots, s_k)$, a packing $S$-coloring is a partition of $V(G)$ into sets $V_1, \ldots, V_k$ such that for each $1\leq i \leq k$ the distance between any two distinct $x,y\in V_i$ is at least $s_i+1$.
The smallest $k$ such that $G$ has a packing $(1,2, \ldots, k)$-coloring is called the packing chromatic number of $G$ and is denoted by $\chi_p(G)$. For a graph $G$, let $D(G)$ denote the graph obtained from $G$ by subdividing every edge. The question whether $\chi_p(D(G)) \le 5$ for all subcubic graphs was first asked by Gastineau and Togni and later conjectured by Bre\v sar, Klav\v zar, Rall and Wash. Gastineau and Togni observed that if one can prove every subcubic graph except the Petersen graph is packing $(1,1,2,2)$-colorable then the conjecture holds. The maximum average degree, mad($G$), is defined to be $\max\{\frac{2|E(H)|}{|V(H)|}: H \subset G\}$. In this paper, we prove that subcubic graphs with $mad(G)<\frac{30}{11}$ are packing $(1,1,2,2)$-colorable. As a corollary, the conjecture of Bre\v sar et al holds for every subcubic graph $G$ with $mad(G)<\frac{30}{11}$.

%Recently, Balogh, Kostochka and Liu proved that $\chi_p(D(G)) \le 8$ in the class of subcubic graphs and only a few results including the generalized prism of a cycle was considered for $(1,1,2,2)$-packing coloring of subcubic graphs.
 %The {\em packing chromatic number}, $\chi_p(G)$, of a graph $G$ is the minimum $k$ such that $G$ has a packing $k$-coloring.
%The questions on the value of the maximum of $\chi_p(G)$  and of $\chi_p(D(G))$ over the class of subcubic graphs $G$
%appear in several papers. Gastineau and  Togni asked whether $\chi_p(D(G))\leq 5$ for any subcubic $G$, and
%later Bre\v sar,  Klav\v zar,  Rall and  Wash conjectured this, but no upper bound was proved.
 %Recently the authors proved that $\chi_p(G)$  is not bounded in the class of subcubic graphs $G$.
%In contrast, in this paper we show that
%$\chi_p(D(G))$ is bounded in this class, and does not exceed $8$.
%\\
 %{\small{\em Mathematics Subject Classification}: 05C15, 05C35.}\\
 %{\small{\em Key words and phrases}:  packing coloring, cubic graphs, independent sets.}
\end{abstract}

\section{Introduction}	
For a sequence of non-decreasing positive integers $S = (s_1, \ldots, s_k)$, a {\em packing $S$-coloring} of a graph $G$ is a partition of $V(G)$ into sets $V_1, \ldots, V_k$ such that for each $1\leq i \leq k$ the distance between any two distinct $x,y\in V_i$ is at least $s_i+1$.
The smallest $k$ such that $G$ has a packing $(1,2, \ldots, k)$-coloring (packing $k$-coloring) is called {\em the packing chromatic number} of $G$ and is denoted by $\chi_p(G)$.
%For a graph $G$, let $D(G)$ denote the graph obtained from $G$ by subdividing every edge. The question whether $\chi_p(D(G)) \le 5$ for all subcubic graphs was first asked by Gastineau and Togni and later conjectured by Bre\v sar, Klav\v zar, Rall and Wash. Gastineau and Togni observed that if one can prove every subcubic graph except the Petersen graph is $(1,1,2,2)$-packing colorable then the conjecture holds.

The notion of packing $k$-coloring was introduced in 2008 by
Goddard,  Hedetniemi, Hedetniemi,  Harris and  Rall~\cite{GHHHR}
%(under the name {\em broadcast coloring})
motivated by frequency assignment problems in broadcast networks.
There are more than 30 papers on the topic
(e.g.~\cite{ANT1,BF,BKR1,BKR2,BKRW1,BKRW2,BKRW3,CJ1, FG1,FKL1,G1,GT1, GHT, KV1, LBS2, S1, TV1} and references in them). In particular, Fiala and Golovach~\cite{FG1} proved that finding the packing chromatic number of a graph is NP-complete even in the class of trees. Sloper~\cite{S1} showed that the infinite complete ternary tree (every vertex has $3$ child vertices) has unbounded packing chromatic number.

For a graph $G$, let $D(G)$ denote the graph obtained from $G$ by subdividing every edge. The questions on how large can $\chi_p(G)$ and $\chi_p(D(G))$ be if $G$ is a subcubic graph (i.e., a graph with maximum degree at most $3$) were discussed in several papers (\cite{BKRW1,BKRW2,GT1,LBS1,S1}).
In particular, Gastineau and Togni~\cite{GT1} asked whether $\chi_p(D(G))\leq 5$ for every subcubic graph $G$ and
Bre\v sar, Klav\v zar, Rall, and Wash~\cite{BKRW2} later conjectured this.

\begin{conj}[Bre\v sar, Klav\v zar, Rall, and Wash~\cite{BKRW2}]\label{conjbkrw}
Let $G$ be a subcubic graph. Then $\chi_p(D(G))\leq 5$.
\end{conj}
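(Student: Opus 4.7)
The plan is to exploit the Gastineau–Togni reduction: since the subdivision vertices of $D(G)$ form an independent set and can all receive color $1$, and since doubling distances carries a packing $(1,1,2,2)$-coloring of $G$ into classes whose pairwise $D(G)$-distances are at least $4,4,6,6$ (which fit colors $2,3,4,5$ respectively), it suffices to prove that every subcubic graph other than the Petersen graph admits a packing $(1,1,2,2)$-coloring. The Petersen graph is then handled separately by exhibiting an explicit packing $(1,2,3,4,5)$-coloring of $D(\text{Petersen})$; this is a finite check that can be organized using the vertex-transitivity and large automorphism group of Petersen so that only a handful of color assignments on an orbit representative need be verified.

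For the main reduction, I would take a minimum counterexample $G$ (in $|V|+|E|$) among subcubic graphs different from Petersen that lack a packing $(1,1,2,2)$-coloring. The paper's result already kills the sparse regime $\mathrm{mad}(G) < 30/11$; the goal is to push the threshold up to $\mathrm{mad}(G) \le 3$, which is automatic for subcubic graphs. The key tool is an enlarged catalogue of reducible configurations: beyond the short chains and short cycles already handled, one would add longer chains of degree-$3$ vertices lying on a short cycle, two short cycles sharing an edge or a vertex, and "tight" neighborhoods around a $3$- or $4$-cycle where the number of coloring options is forced. For each configuration, one deletes a bounded-size subgraph, invokes minimality on the smaller graph (verifying the reduct is not Petersen, which it cannot be since Petersen has no proper subcubic subgraph containing a cubic vertex after local surgery), and extends by case analysis driven by the few remaining color choices. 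A corresponding discharging rule would then redistribute the initial charge $d(v) - 3$ so that some vertex in $G$ ends with positive final charge, a contradiction.

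The main obstacle is the essentially $3$-regular case of large girth, where the discharging-plus-reducibility machinery loses its grip because no short cycle is available and every vertex is "full." To cover this regime I would supplement the structural argument with a probabilistic one via the Lovász Local Lemma: when the girth exceeds some absolute constant $g_0$, a random partition of $V(G)$ into four classes with carefully tuned probabilities avoids all distance-violation events simultaneously, since each bad event (a pair of same-class vertices at forbidden distance) in a subcubic graph lies within a bounded-radius ball and therefore depends on only $O(1)$ other bad events. Combined with the enlarged discharging, this should cover all girths except possibly an intermediate band.

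The two delicate points I expect to fight are (i) the intermediate-girth seam $7 \le g(G) < g_0$, where neither short-cycle reductions nor the LLL directly apply, and (ii) ruling out hidden exceptions: one must verify that every proposed reducible configuration is genuinely absent from Petersen and that no other obstruction to $(1,1,2,2)$-coloring exists among small cubic graphs. For (i) a fallback is to lift a coloring from a finite cover $\tilde G \to G$ of higher girth, using that packing colorings pull back along locally isometric covers up to the relevant radius; for (ii) a preliminary computer enumeration of small cubic graphs (up to, say, $20$ vertices) would confirm that Petersen is indeed the unique exception and would guide the choice of which configurations to declare reducible.
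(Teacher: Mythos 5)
You should first be aware that the statement you were asked to prove is a \emph{conjecture} in this paper: the authors do not prove it, and there is no ``paper's own proof'' to compare against. What the paper actually establishes is the special case $\mathrm{mad}(G)<\frac{30}{11}$ (Theorem~\ref{mad2.75}), combined with Proposition~\ref{extension} to convert a packing $(1,1,2,2)$-coloring of $G$ into a packing $(1,3,3,5,5)$-coloring, hence a packing $5$-coloring, of $D(G)$. Your opening reduction is correct and is exactly the Gastineau--Togni observation (your distances $4,4,6,6$ are the $2s_i+1$ of Proposition~\ref{extension} plus the independent set of subdivision vertices in color $1$), and your remark that the Petersen graph must be handled by a direct coloring of its subdivision is also the standard and correct caveat. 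But everything after the reduction is a research program with acknowledged holes, not a proof, and each of its three pillars has a concrete defect.

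First, discharging with initial charge $d(v)-3$ is \emph{vacuous} on cubic graphs: every vertex starts with charge exactly $0$ and the total is exactly $0$, so no vertex has surplus to redistribute and no configuration can be ``paid for.'' This is precisely why the paper's method stops strictly below $\mathrm{mad}(G)=3$; pushing the threshold to $3$ is not a matter of enlarging the catalogue of reducible configurations, since the $3$-regular case gives the method zero slack everywhere. Second, the Lov\'asz Local Lemma step fails numerically and cannot be rescued by tuning: the four class probabilities sum to $1$, so some class has probability at least $\frac14$ and some bad event (a pair in that class at forbidden distance) has probability at least $\frac1{16}$, while each such event in a cubic graph shares vertices with on the order of $10^2$ other events (pairs within distance $3$ of its endpoints); the product $e\,p\,(D+1)$ is far above $1$, and increasing the girth does not shrink the local dependency degree at all. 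Indeed the Balogh--Kostochka--Liu result cited in this very paper shows that for packing colorings large girth makes matters \emph{harder}, not easier, so the large-girth regime is exactly where the difficulty concentrates, and your proposal has no working tool there. Third, the finite-cover fallback runs in the wrong direction: a packing coloring \emph{lifts} from $G$ to a cover $\tilde G$ (projected distances do not increase, so the pulled-back classes remain packings), but you need to transfer a coloring \emph{from} the high-girth cover $\tilde G$ \emph{down to} $G$, which requires the coloring of $\tilde G$ to be invariant under the deck transformation group --- and producing such an invariant coloring is equivalent to coloring $G$ itself. So even as a program, the proposal stalls exactly on the essentially $3$-regular case, which is the entire content of the open problem; the conjecture remains open, and your sketch does not close it.
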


Recently, Balogh, Kostochka and Liu~\cite{BKL} showed that $\chi_p(G)$ is not bounded in the class of cubic graphs. They actually proved a stronger result: for each fixed integer $k \ge 12$ and $g \ge 2k + 2$, almost every $n$-vertex cubic graph of girth at least $g$ has the packing chromatic number greater than $k$. Bre\v sar and Ferme~\cite{BF} later provided an explicit family of subcubic graphs with unbounded packing chromatic number.
In contrast, Balogh, Kostochka and Liu~\cite{BKL2} showed $\chi_p(D(G))$ is bounded by $8$ in the class of subcubic graphs.

%In particular, a $(1,\ldots,1)$-coloring is an ordinary coloring, and a $(1,2,\ldots,k)$-coloring is a packing $k$-coloring.
The following observation of Gastineau and Togni~\cite{GT1} implies that if one can prove every subcubic graph except the Petersen graph is packing $(1,1,2,2)$-colorable then $\chi_p (D(G)) \le 5$ for every subcubic graph. They also asked a stronger question that whether every subcubic graph except the Petersen graph is packing $(1,1,2,3)$-colorable.

\begin{proposition}[\cite{GT1} Proposition 1]\label{extension}
Let $G$ be a graph and $S
=(s_1,...,s_k)$ be a non-decreasing sequence of integers. If $G$ is $S$-colorable then $D(G)$ is $(1, 2s_1+1, \ldots, 2s_k+1)$-colorable.
\end{proposition}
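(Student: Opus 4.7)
The plan is to exhibit an explicit packing $(1,2s_1+1,\ldots,2s_k+1)$-coloring of $D(G)$ using the given packing $S$-coloring $V_1,\ldots,V_k$ of $G$. The natural assignment is: put every subdivision vertex of $D(G)$ into the color class corresponding to the value $1$, and for each $1\le i\le k$, assign color $2s_i+1$ to the vertices in $V_i$ (these are original vertices of $G$, now viewed inside $D(G)$).

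The key structural fact that makes this work is that subdividing every edge exactly doubles distances between original vertices: for any $u,v\in V(G)$, $d_{D(G)}(u,v)=2\,d_G(u,v)$, since every shortest $u$--$v$ path in $G$ lifts to a path in $D(G)$ of twice the length, and conversely any $u$--$v$ path in $D(G)$ alternates between original and subdivision vertices and so has even length. I would record this as the first step.

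Then the verification splits into two easy checks. First, the set of subdivision vertices is independent in $D(G)$ (each subdivision vertex has its two neighbors inside $V(G)$), so the color class with parameter $1$ is legitimate, since any two distinct vertices there are at distance at least $2$. Second, for each $i$ and any two distinct $u,v\in V_i$, the $S$-coloring assumption gives $d_G(u,v)\ge s_i+1$, so by the distance-doubling identity $d_{D(G)}(u,v)\ge 2s_i+2$, which is exactly the requirement for the color class with parameter $2s_i+1$.

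There is no substantial obstacle here; the whole content of the proposition is the observation that $D(G)$ doubles pairwise distances among original vertices and leaves the subdivision vertices pairwise non-adjacent. Once those two facts are stated, the coloring described above is immediately a valid packing $(1,2s_1+1,\ldots,2s_k+1)$-coloring of $D(G)$.
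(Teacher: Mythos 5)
Your proof is correct. The paper does not prove this proposition itself (it is quoted verbatim from Gastineau and Togni as their Proposition 1), and your argument --- keeping all subdivision vertices as the class for the parameter $1$ and reusing each $V_i$ for the parameter $2s_i+1$, justified by the distance-doubling identity $d_{D(G)}(u,v)=2\,d_G(u,v)$ for original vertices --- is exactly the standard argument behind the cited observation.
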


The problem whether every subcubic graph except the Petersen graph has a packing $(1,1,2,2)$-coloring is very intriguing by itself but only a few subclasses of subcubic graphs were shown to have such a coloring. In particular, Bre\v sar, Klav\v zar, Rall, and Wash~\cite{BKRW2} showed that if $G$ is a generalized prism of a cycle, then $G$ is packing $(1,1,2,2)$-colorable if and only if $G$ is not the Petersen graph. Many similar colorings have also been considered (e.g.~\cite{BKL2, BGT, GT1, GKT, GX1, GX2}). In particular, Gastineau and Togni~\cite{GT1} showed subcubic graphs are packing $(1,2,2,2,2,2,2)$-colorable and packing $(1,1,2,2,2)$-colorable. Balogh, Kostochka and Liu~\cite{BKL2} proved that subcubic graphs are packing $(1,1,2,2,3,3,4)$-colorable with color $4$ used at most once and $2$-degenerate subcubic graphs are packing $(1,1,2,2,3,3)$-colorable. Moreover, Borodin and Ivanova~\cite{BI} proved that every subcubic planar graph with girth at least $23$ has a packing $(2,2,2,2)$-coloring. Bre\v sar, Gastineau and Togni~\cite{BGT} proved very recently that every subcubic outerplanar graph has a packing $(1,2,2,2)$-coloring and their result is sharp in the sense that there exist subcubic outerplanar graphs that are not packing $(1,2,2,3)$-colorable.

%\section{Main results}

In this paper, we consider packing $(1,1,2,2)$-coloring of subcubic graphs  with bounded  maximum average degree, {\em{mad($G$)}}, which is defined to be $\max\{\frac{2|E(H)|}{|V(H)|}: H \subset G\}$.

%We may assume that our graph is always connected, since otherwise we consider each component separately.

\begin{theorem}\label{mad2.75}
Every subcubic graph $G$ with $mad(G)<\frac{30}{11}$ is packing $(1,1,2,2)$-colorable.
\end{theorem}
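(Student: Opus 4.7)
The plan is the standard minimal-counterexample-plus-discharging argument. Suppose, for contradiction, that $G$ is a counterexample to the theorem with the minimum number of vertices. Since $\mathrm{mad}$ is monotone under taking subgraphs, every proper subgraph of $G$ has $\mathrm{mad}<30/11$, so by minimality every proper subgraph of $G$ is packing $(1,1,2,2)$-colorable. I would then isolate a list of local \emph{reducible configurations}: structures whose presence in $G$ lets us delete a small piece, invoke minimality to color what remains, and then extend the coloring back, contradicting that $G$ is a counterexample. Once a sufficiently rich list of reducible configurations is ruled out, I would assign each vertex the initial charge $\mu(v)=d(v)-\frac{30}{11}$, so that $\sum_v\mu(v)=2|E(G)|-\tfrac{30}{11}|V(G)|<0$, design discharging rules that move charge from $3$-vertices to $2$-vertices, and show that after discharging every vertex has nonnegative final charge, which contradicts $\mathrm{mad}(G)<30/11$.

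\textbf{Reducible configurations.} A quick case analysis shows that a degree-$1$ vertex $v$ with neighbor $u$ is always reducible: in any packing $(1,1,2,2)$-coloring of $G-v$, at least one of the two colors of type $1$ differs from the color of $u$ and can be assigned to $v$. So I may assume $\delta(G)\ge 2$. The main work is the analysis of \emph{threads}, i.e., maximal paths of degree-$2$ vertices between two $3$-vertices (or within a cycle). I would aim to prove that sufficiently long threads are reducible, and also that certain patterns of short threads incident to the same $3$-vertex or to adjacent $3$-vertices are forbidden. Each such reduction is proved by deleting the internal $2$-vertices, obtaining a packing $(1,1,2,2)$-coloring of the smaller graph by minimality, and then greedily (or, when needed, via a small recoloring) extending the coloring to the deleted vertices, using that each vertex has at most $2+(2\cdot 2)=6$ neighbors within distance $2$ in a subcubic graph and the four available colors are split as two ``independent-set'' colors and two ``distance-$3$'' colors.

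\textbf{Discharging.} With $\mu(v)=d(v)-30/11$, a $3$-vertex has initial charge $+3/11$ and a $2$-vertex has initial charge $-8/11$. Since a single $3$-vertex has only $3/11$ to donate, a $2$-vertex must draw charge from $3$-vertices on both sides of its thread, and the amount it receives must depend on the length of the thread and on how many $2$-neighbors sit on the $3$-endpoint. The rules I would use send a fixed positive amount from each $3$-vertex to each of its thread-neighborhoods, scaled so that the reducibility restrictions on thread lengths and on ``bad'' $3$-vertices force every $3$-vertex to end with charge $\ge 0$ and every $2$-vertex to reach $0$. The value $30/11$ is chosen precisely so that the tightest local configuration not forbidden by reducibility balances exactly, yielding $\sum_v\mu(v)\ge 0$ and hence the desired contradiction.

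\textbf{Where the difficulty lies.} The main obstacle is the reducibility step, not the discharging. The delicate point is that, unlike a proper coloring, each ``$2$''-color must be spread at distance $\ge 3$, so removing just one or two vertices can still leave every color locally forbidden; one is often forced to recolor several nearby vertices simultaneously, and then check that these recolorings do not propagate. Organizing the case analysis so that every removed thread can be recolored under all possible colorings of its boundary, while keeping the list of forbidden configurations small enough that $30/11$ suffices in the discharging phase, is the heart of the argument and where the technical effort concentrates.
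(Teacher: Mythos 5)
Your overall architecture (minimal counterexample, reducible configurations, discharging with $\mu(v)=d(v)-\frac{30}{11}$) matches the paper's, and your degree-one reduction is exactly the paper's first lemma. But there is a concrete gap in the discharging plan that the rest of your sketch does not address: a $2$-vertex has deficit $\frac{8}{11}$ while a $3$-vertex has surplus only $\frac{3}{11}$, so even if both $3$-neighbors of a $2$-vertex donated their \emph{entire} surplus, the $2$-vertex would still be short by $\frac{2}{11}$. A purely thread-based, distance-one transfer scheme of the kind you describe therefore cannot balance, no matter how the rates are tuned. The missing charge must be imported from the \emph{second} neighborhood, and that is where the real work of the paper lies.

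Concretely, the paper first shows that no two $2$-vertices are adjacent and that every $3$-vertex has at most one $2$-neighbor (so your ``long threads'' concern evaporates: all threads have length one, and the non-special $3$-vertices can afford to send their full $\frac{3}{11}$ to their unique $2$-neighbor). The decisive structural lemma is then that every $2$-vertex $u$ has at least two \emph{special} $3$-vertices (ones whose neighbors are all $3$-vertices) in $N_G^2(u)$; each such vertex sends $\frac{1}{11}$ to $u$, closing the $\frac{2}{11}$ gap, and a special $3$-vertex serves at most three $2$-vertices at distance two, so it keeps nonnegative charge. Proving that lemma is a lengthy recoloring case analysis built on a ``tool'' lemma classifying how a $(1,1,2,2)$-coloring of $G-v$ can fail to extend to a $2$-vertex $v$ (either the two neighbors get the two $1$-colors and both $2$-colors appear in $N_G^2(v)$, or both neighbors get the same $2$-color and each sees both $1$-colors). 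Your proposal correctly locates the difficulty in the reducibility/recoloring step, but without identifying the second-neighborhood configurations to be excluded, the discharging phase as you have set it up would fail at the $2$-vertices.
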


Since planar graphs with girth at least $g$ have maximum average degree less than $\frac{2g}{g-2}$, we obtain the following corollary,  which extends the result of Borodin and Ivanova~\cite{BI} on packing $(1,1,2,2)$-coloring of subcubic planar graphs.

\begin{corollary}\label{girth8}
Every subcubic planar graph with girth at least $8$ is packing $(1,1,2,2)$-colorable.
\end{corollary}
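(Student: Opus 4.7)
The plan is to deduce Corollary~\ref{girth8} as a direct numerical specialization of Theorem~\ref{mad2.75}; the only real content is a standard Euler-formula bound on the density of planar graphs with given girth, a fact already alluded to in the sentence preceding the corollary.

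First, I would record the classical inequality: if $H$ is a planar graph with $n \ge 3$ vertices, $m$ edges, and girth at least $g$, then Euler's formula $n - m + f \ge 2$ (applied component-wise) combined with the face-length bound $2m \ge g f$ (each face is bounded by a closed walk of length at least $g$) yields $m \le \tfrac{g}{g-2}(n-2)$, and in particular $\tfrac{2m}{n} < \tfrac{2g}{g-2}$. This bound transfers to every subgraph, because any subgraph of a planar graph is planar and the girth of a subgraph is at least the girth of the host graph (handling trees and very small subgraphs separately, where the inequality is trivial). Hence if $G$ is a subcubic planar graph with girth at least $8$, then
\[
  \mathrm{mad}(G) \;=\; \max_{H \subseteq G} \frac{2|E(H)|}{|V(H)|} \;<\; \frac{2 \cdot 8}{8-2} \;=\; \frac{8}{3}.
\]

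Second, I would verify the numerical inequality $\tfrac{8}{3} < \tfrac{30}{11}$ by cross-multiplying: $8 \cdot 11 = 88 < 90 = 3 \cdot 30$. Therefore $\mathrm{mad}(G) < \tfrac{30}{11}$, and Theorem~\ref{mad2.75} immediately provides the desired packing $(1,1,2,2)$-coloring of $G$. There is no genuine obstacle: once Theorem~\ref{mad2.75} is available, the corollary reduces to a one-line computation comparing $2g/(g-2)$ at $g=8$ with the threshold $30/11$ from the main theorem.
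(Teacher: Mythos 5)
Your proposal is correct and matches the paper's approach exactly: the paper derives the corollary from Theorem~\ref{mad2.75} via the same standard fact that planar graphs of girth at least $g$ have maximum average degree less than $\frac{2g}{g-2}$, which at $g=8$ gives $\frac{8}{3}<\frac{30}{11}$. The only difference is that you spell out the Euler-formula derivation and the cross-multiplication, which the paper leaves implicit.
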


By Proposition~\ref{extension}, we also have the following immediate corollary, which confirms Conjecture~\ref{conjbkrw} for subcubic graphs with maximum average degree less than $\frac{30}{11}$.
%We prove the following theorem on $(1,1,2,2)$-packing coloring of subcubic planar graphs with $MAD(G) < \frac{11}{4}$.

\begin{corollary}
If $G$ is a subcubic graph with $mad(G)<\frac{30}{11}$, then $\chi_p(D(G))\le 5$.
\end{corollary}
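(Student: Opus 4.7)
The plan is to argue by contradiction: suppose $G$ is a counterexample minimizing $|V(G)|+|E(G)|$. Since $\mathrm{mad}$ is monotone under subgraph inclusion, every proper subgraph of $G$ also has $\mathrm{mad}<30/11$ and so, by minimality, admits a packing $(1,1,2,2)$-coloring. A pendant vertex $v$ can always be added to a coloring of $G-v$, because the unique neighbor of $v$ blocks at most one of the two independent-set colors in $\{1,1'\}$; hence $G$ is connected with $\delta(G)\ge 2$. Call a maximal path whose internal vertices all have degree $2$ in $G$ a \emph{thread}; when $\delta(G)\ge 2$, either $G$ is a cycle (handled directly) or the endpoints of every thread are degree-$3$ vertices of $G$.

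The closing step is a discharging argument with initial charge $\mu(v)=11d(v)-30$, so that $\sum_v\mu(v)=22|E(G)|-30|V(G)|<0$ by the hypothesis on $\mathrm{mad}(G)$. Every degree-$2$ vertex starts at $-8$ and every degree-$3$ vertex at $+3$; the rules will transfer charge from degree-$3$ vertices outward along their incident threads, sending larger amounts to closer degree-$2$ recipients. To make this budgeting succeed, we first establish a list of reducible configurations---essentially, forbidding long threads and forbidding a single degree-$3$ vertex from being incident to too many long threads at once---so that the $+3$ available at each degree-$3$ vertex can be distributed in blocks large enough to raise every degree-$2$ vertex to $\ge 0$. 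Once these structural constraints are in place, summing final charges yields $0>\sum_v\mu^*(v)\ge 0$, the desired contradiction.

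The main obstacle is the reducibility analysis. For each forbidden configuration $H\subseteq G$, the task is to delete a small set of vertices from $G$ (possibly together with a local reconnection that preserves subcubicness and the $\mathrm{mad}$ bound) to obtain a graph $G'$ on which minimality provides a packing $(1,1,2,2)$-coloring, and then to extend that coloring back to $G$. Because colors $2$ and $2'$ forbid repetition within distance $2$, the colors of every vertex within distance $2$ of the boundary of $H$ constrain the extension; the proof must case on the at most four colors at each boundary vertex and on whether $2$ or $2'$ already appears in its second neighborhood. The casework is densest where several short threads meet at a common degree-$3$ vertex, since the packing constraints then interact most tightly. The delicate balance is to keep each reduction small enough that the bound $\mathrm{mad}(G')<30/11$ and the subcubicness are preserved, yet large enough that the four-color palette always leaves room to extend regardless of how the boundary is colored.
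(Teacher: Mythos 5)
You are proving the wrong statement, and the part you do attempt is left unfinished. The corollary's entire content in the paper is a two-line deduction: by Theorem~\ref{mad2.75} the graph $G$ has a packing $(1,1,2,2)$-coloring, and by Proposition~\ref{extension} (the Gastineau--Togni lifting) this yields a packing $(1,3,3,5,5)$-coloring of the subdivision $D(G)$, which is in particular a packing $(1,2,3,4,5)$-coloring, so $\chi_p(D(G))\le 5$. Your write-up never performs this step: nowhere do you pass from a coloring of $G$ to a coloring of $D(G)$, so even a complete colorability argument for $G$ would not, as written, prove the stated corollary. Relatedly, your minimal-counterexample setup is ambiguous: if $G$ is minimal with respect to failing $\chi_p(D(G))\le 5$, then minimality only gives $\chi_p(D(H))\le 5$ for proper subgraphs $H$, not that $H$ is packing $(1,1,2,2)$-colorable (the implication between these properties goes only one way), so the induction you invoke does not get started unless you explicitly take the counterexample to be minimal for $(1,1,2,2)$-colorability, i.e.\ re-prove Theorem~\ref{mad2.75}.

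If that is your intent, the hard part is named but not carried out. With your charge $11d(v)-30$, a $2$-vertex sits at $-8$ while each $3$-vertex has only $+3$ to spend; since adjacent $2$-vertices are excluded, a $2$-vertex has two $3$-neighbors contributing at most $3+3=6<8$, so charge must be pulled from distance two. The structural fact that makes this possible in the paper --- every $2$-vertex has at least two \emph{special} $3$-vertices (all of whose neighbors are $3$-vertices) in its second neighborhood, each donating $1$ --- is Lemma~\ref{special-3-vertex}, by far the longest reducibility proof in the paper, resting on the extension dichotomy of Lemma~\ref{tool1} and extensive case analysis. Your sketch of ``forbidding long threads'' and ``forbidding too many long threads at one vertex'' covers only the easy lemmas (no adjacent $2$-vertices; at most one $2$-neighbor per $3$-vertex) and never identifies the distance-two donation that closes the arithmetic, so the discharging as described cannot end with all final charges non-negative.
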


\begin{proof}
Proposition~\ref{extension} implies that if $G$ is packing $(1,1,2,2)$-colorable then $D(G)$ is packing $(1,3,3,5,5)$-colorable, which implies a packing $(1,2,3,4,5)$-coloring of $D(G)$ and thus $\chi_p(D(G)) \le 5$.
\end{proof}

%The following well-known Proposition implies that if Theorem~\ref{mad2.75} is true then every subcubic planar graph with girth at least $8$ is $(1,1,2,2)$-packing colorable and thus we only need to verify the case of girth $7$ for Theorem~\ref{girth7}.

%\begin{proposition}\label{girthmad}
%If $G$ is a connected planar graph with girth $g$, then $MAD(G) < 2 + \frac{4}{g-2}$.
%\end{proposition}

In the end of this section, we introduce some notations used in the paper. A $k$-vertex ($k^+$-vertex,  $k^-$-vertex) is a vertex of degree $k$ (at least $k$, at most $k$). %The same notation will apply to faces.
For each $u\in V(G)$, call $v$ a $k$-neighbor of $u$ if $v$ is a neighbor of $u$ and has degree $k$. $N_G^d(u)$ denotes the set of all vertices that are at distance $d$ from $u$.
% We use $F(G)$ to denote the set of faces in $G$. An $(\ell_1, \ell_2, \ldots, \ell_k)$-face is a $k$-face $[v_1v_2\ldots v_k]$ with $d(v_i)=\ell_i$, respectively.

%\begin{definition}
%For a positive integer $s$ and a vertex $a$ in a graph $G$, the {\em ball $B_{G}(a,s)$ in $G$ of radius $s$ with center $a$}
%is $\{v\in V(G)\,:\; d_G(v,a)\leq s\}$, where $d_G(v,a)$ denotes the distance in $G$ from $v$ to $a$.
%We ignore $G$ and write $B(a,s)$ when the meaning is clear.
%\end{definition}

%\begin{definition}

%\end{definition}

%By a {\em feasible} coloring of $G$ we call a coloring of $G$ with colors $1_a,1_b,2_a,2_b$ such that the distance between two vertices of color $i_x$ is at least $i+1$ for all $1\leq i\leq 2$ and $x\in \{a,b\}$.

\section{Proof of Theorem~\ref{mad2.75}}
Let $G$ be a minimum counterexample to Theorem~\ref{mad2.75} with fewest number of vertices. %Note that Lemma~\ref{minimum} to Lemma~\ref{special-3-vertex} also hold for every subcubic graph that is not $(1,1,2,2)$-packing colorable.
For simplicity, we use $(1,1,2,2)$-coloring instead of packing $(1,1,2,2)$-coloring in the rest of this paper.  We assume that the colors are $\{1_a, 1_b, 2_a, 2_b\}$ such that vertices with color $1_a$ (or $1_b$) are not adjacent and vertices with color $2_a$ (or $2_b$) must have distance at least two.

%We first observe that $G$ has no $1$-vertex. To see this, assume there is a $1$-vertex $u$ in $G$ and we delete $u$ from $G$ to obtain a subcubic planar graph with $MAD<\frac{11}{4}$. Since $G$ is a smallest counter-example with respect to the number of vertices, we know that $G-u$ has a feasible coloring $f$. Let $v$ be the neighbour of $u$ in $G$. We color $u$ by $\{1_a,1_b\} - f(v)$ to obtain a feasible coloring of $G$.

\begin{lemma}\label{minimum}
$\delta(G)\ge2$.
\end{lemma}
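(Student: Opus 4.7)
The plan is a reducibility-by-deletion argument. I would assume for contradiction that $G$ contains a vertex $v$ with $\deg_G(v)\le 1$ and set $G' := G - v$. Since deleting a vertex can only decrease degrees, $G'$ is subcubic; and since every subgraph of $G'$ is a subgraph of $G$, one has $\mathrm{mad}(G')\le\mathrm{mad}(G)<\tfrac{30}{11}$. Hence $G'$ satisfies the hypothesis of Theorem~\ref{mad2.75}, and by the minimality of $G$ it admits a packing $(1,1,2,2)$-coloring $\varphi$.

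The task is to extend $\varphi$ to $v$. If $\deg_G(v)=0$, any color works. Otherwise let $u$ be the unique neighbor of $v$, and let $w_1, w_2$ denote the remaining neighbors of $u$ in $G$ (at most two of them, since $G$ is subcubic). Every vertex of $G$ within distance $2$ of $v$ lies in $\{u, w_1, w_2\}$. Consequently, a color $c \in \{1_a, 1_b, 2_a, 2_b\}$ is forbidden at $v$ only if either $\varphi(u)=c$ (because $u$ is adjacent to $v$) or $c$ is a $2$-color that appears at some $w_i$ (because $\mathrm{dist}_G(v,w_i)=2$). That yields at most $1 + 2 = 3$ forbidden colors out of four, so at least one color of the palette remains available for $v$. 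Assigning it to $v$ produces a packing $(1,1,2,2)$-coloring of $G$, contradicting the choice of $G$ as a counterexample.

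There is essentially no genuine obstacle here beyond counting forbidden colors: the key observation is simply that the distance-$2$ ball around a pendant vertex in a subcubic graph contains at most three other vertices, which is strictly less than the size of the $(1,1,2,2)$-palette. I expect the author's proof to proceed along exactly these lines.
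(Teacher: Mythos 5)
Your proposal is correct and follows essentially the same route as the paper: delete the low-degree vertex, invoke minimality to color the rest, and extend greedily. The paper's extension step is even simpler than your count of forbidden colors --- it just assigns $v$ a color from $\{1_a,1_b\}\setminus\{f(u)\}$, since a $1$-color only needs to avoid the single neighbor $u$ --- but this is a cosmetic difference, not a different argument.
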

\begin{proof}
Suppose otherwise that $v$ is a $1$-vertex in $G$ with $uv\in E(G)$. By the minimality of $G$, $G\setminus \{v\}$ has a $(1,1,2,2)$-coloring $f$. Then we can extend $f$ to $G$ by coloring $v$ with a color in  $\{1_a,1_b\}\setminus \{f(u)\}$, which contradicts the assumption that $G$ is a minimum counterexample.
\end{proof}

\begin{lemma}\label{2_adj}
There are no adjacent $2$-vertices in $G$.
\end{lemma}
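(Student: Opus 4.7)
The plan is to assume that $u,v$ are adjacent $2$-vertices in $G$, let $u'$ (respectively $v'$) be the other neighbor of $u$ (respectively $v$), and consider $G' = G - \{u,v\}$. Since $G'$ has fewer vertices and $mad(G') \le mad(G) < \frac{30}{11}$, by the minimality of $G$ there is a $(1,1,2,2)$-coloring $f$ of $G'$, which I aim to extend to all of $G$ to derive a contradiction.

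For $w \in \{u, v\}$, the colors forbidden on $w$ by $f$ come from two sources: (i) the color of the already-colored neighbor of $w$, and (ii) any $2_a$ or $2_b$ that appears on the distance-$2$ set $N_G^2(w) \subseteq \{u_1', u_2', v'\}$ (for $w = u$) or $\{v_1', v_2', u'\}$ (for $w = v$). Since $|N_G^2(w)| \le 3$, the forbidden set $F_w$ has size at most $3$, so $w$ always has at least one available color. A short case check shows that $w$ has only one available color precisely when its colored neighbor carries a color-$1$ label \emph{and} both $2_a$ and $2_b$ occur on $N_G^2(w)$; in that case the unique available color is the other color-$1$ label. In particular, $w$ can never be forced to a single color-$2$ label.

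If at least one of $u, v$ has two available colors, or both are forced but to distinct color-$1$ labels, I can directly choose $c_u \ne c_v$ and extend. The only genuine obstruction is when $f(u') = f(v')$ is a common color-$1$ label (say $1_a$) and each of $u, v$ is forced to $1_b$. The key step in this case is to \textbf{recolor} $u'$ from $1_a$ to $1_b$: note that $u'$ must have degree $3$ in $G$ (otherwise $N_G^2(u)$ cannot supply both $2_a$ and $2_b$, since $v'$ is colored $1_a$ and so contributes nothing of color $2$), and its two other neighbors $u_1', u_2'$ must then carry the labels $\{2_a, 2_b\}$. Hence neither of them is $1_b$, and the swap produces another valid $(1,1,2,2)$-coloring of $G'$ (color-$2$ constraints are untouched because only a color-$1$ subscript has been flipped). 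After the swap $u$ is forced to $1_a$ and $v$ remains forced to $1_b$, so $c_u = 1_a$, $c_v = 1_b$ extends $f$ to $G$. The symmetric sub-case $f(u') = f(v') = 1_b$ is handled by recoloring $u'$ to $1_a$, and the degenerate case $u' = v'$ (forming a triangle on $u, v, u'$) is easier, since then $N_G^2(u)$ and $N_G^2(v)$ each contain at most one vertex, giving $u, v$ at least two available colors and an immediate choice $c_u \ne c_v$.

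The main anticipated obstacle is the feasibility of the recoloring swap in the problematic case: it is crucial that both color-$2$ labels on $N_G^2(u)$ be supplied by the two genuine third-ring vertices $u_1', u_2'$, which forces both of them to be color-$2$ and rules out the presence of a $1_b$-neighbor of $u'$ in $G'$. Once this observation is in place, the legality of the swap and its effect on the available sets of $u$ and $v$ follow immediately, yielding the desired contradiction.
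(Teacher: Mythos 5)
Your proof is correct and takes essentially the same route as the paper's: delete $\{u,v\}$, try to extend with distinct $1$-labels, and in the only obstructed case (both $u$ and $v$ forced to the same $1$-label) observe that $u'$ must then be a $3$-vertex whose two remaining neighbors carry $2_a$ and $2_b$, so its $1$-label can be swapped to free a color for $u$. The paper's argument is a terser phrasing of exactly this recoloring.
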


\begin{proof}
Suppose otherwise that $u,v$ are adjacent $2$-vertices in $G$. Let $N_G^1(u)=\{u',v\}$ and $N_G^1(v)=\{u,v'\}$. By the minimality of $G$, $G\setminus \{u,v\}$ has a $(1,1,2,2)$-coloring $f$. We color $u$ (respectively $v$) with a color in $\{1_a,1_b\}\setminus \{f(u')\}$ (respectively $\{1_a,1_b\}\setminus \{f(v')\}$). We obtain a $(1,1,2,2)$-coloring of $G$ unless $u,v$ receive the same color. Thus, we may assume $f(u') = f(v') = 1_b$ and $f(u) = f(v) = 1_a$. Moreover, we may assume $d(u')=3$ and $f(u') = \{1_a, 2_a, 2_b\}$, since otherwise we recolor $u$ with a color $x \in \{2_a, 2_b\}\setminus f(u')$ and obtain a $(1,1,2,2)$-coloring of $G$. We obtain a $(1,1,2,2)$-coloring of $G$ by recoloring $u'$ with $1_a$ and $u$ with $1_b$, which is a contradiction.
\end{proof}

We will use lemma~\ref{tool1} extensively in the rest of the paper.

\begin{lemma}\label{tool1}
Let $v$ be a $2$-vertex in $G$ with two neighbors $u,w$. Let $N_G^1(u)=\{v,u_1,u_2\}$ and $N_G^1(w)=\{v,w_1,w_2\}$. Let $f$ be a $(1,1,2,2)$-coloring of $G-v$.  Then either $\{f(u), f(w)\}=\{1_a, 1_b\}$, and $\{1_a,1_b\} \subseteq \{f(u), f(u_1), f(u_2)\}$, $\{1_a,1_b\} \subseteq \{f(w), f(w_1), f(w_2)\}$ and $\{2_a, 2_b\} \subseteq f(N_G^2(v))$; or $f(u)=f(w) \in \{2_a, 2_b\}$, and $\{f(u_1),f(u_2)\}=\{f(w_1),f(w_2)\}=\{1_a,1_b\}$.
\end{lemma}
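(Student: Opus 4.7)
The plan is to enumerate the possible configurations of $(f(u),f(w))$ and use the minimum counterexample hypothesis to rule out all but the two listed cases, then extract the structural conditions within each case by recoloring arguments. By Lemma~\ref{minimum} and Lemma~\ref{2_adj}, both $u$ and $w$ are $3$-vertices, so $u_1, u_2, w_1, w_2$ are well-defined.

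I first eliminate every configuration of $(f(u), f(w))$ not listed in the lemma by extending $f$ directly: if $f(u) = f(w) \in \{1_a, 1_b\}$, assign $v$ the other $1$-color; if exactly one of $f(u), f(w)$ is a $1$-color, assign $v$ the unused $1$-color; if $\{f(u), f(w)\} = \{2_a, 2_b\}$, assign $v$ color $1_a$. In each case $f$ extends to a valid $(1,1,2,2)$-coloring of $G$, contradicting minimality. This leaves precisely the two triggers $\{f(u), f(w)\} = \{1_a, 1_b\}$ and $f(u) = f(w) \in \{2_a, 2_b\}$.

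For the first trigger, both $1$-colors are directly blocked at $v$, and the impossibility of coloring $v$ with $2_a$ or $2_b$ forces $\{2_a, 2_b\} \subseteq f(N_G^2(v))$ (using $f(u), f(w) \in \{1_a, 1_b\}$). To establish $\{1_a, 1_b\} \subseteq \{f(u), f(u_1), f(u_2)\}$, take WLOG $f(u) = 1_a$; if $1_b \notin \{f(u_1), f(u_2)\}$, then either $u \sim w$ forces $w \in \{u_1, u_2\}$ with $f(w) = 1_b$ (a direct contradiction), or $u \not\sim w$, in which case we recolor $u \to 1_b$ and set $f(v) = 1_a$ to extend $f$ to $G$---contradiction. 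The symmetric argument at $w$ completes this case. For the second trigger, say $f(u) = f(w) = 2_a$, the key observation is that $d_G(u,w) \le 2$ through $v$, violating the distance-$3$ requirement for color $2_a$, so $f$ cannot extend to any valid coloring of $G$ regardless of $v$'s color. If $1_a \notin \{f(u_1), f(u_2)\}$, recoloring $u \to 1_a$ is valid in $G-v$ (since $f(w) = 2_a \ne 1_a$) and setting $f(v) = 1_b$ yields a valid $(1,1,2,2)$-coloring of $G$---contradiction. By symmetry $\{1_a, 1_b\} \subseteq \{f(u_1), f(u_2)\}$, and since adjacency to $u$ forbids $f(u_i) = 2_a$, we conclude $\{f(u_1), f(u_2)\} = \{1_a, 1_b\}$; the analogous argument at $w$ finishes.

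The main subtlety will be verifying that each recoloring remains globally valid in $G$---particularly the distance-$2$ constraints for the $2$-colors, which require checking that no new distance-$2$ path through $v$ creates a color conflict---and handling the degenerate adjacencies such as $u \sim w$ (a triangle through $v$) or shared neighbors among $\{u_1, u_2, w_1, w_2\}$ cleanly throughout.
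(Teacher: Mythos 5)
Your proposal is correct and follows essentially the same route as the paper: rule out all configurations of $(f(u),f(w))$ other than the two listed by coloring $v$ directly with an available $1$-color, then in each surviving case derive the structural conditions by recoloring $u$ (or $w$) with a $1$-color and coloring $v$, and obtain $\{2_a,2_b\}\subseteq f(N_G^2(v))$ from the impossibility of giving $v$ a $2$-color. Your extra care with the degenerate case $u\sim w$ and with why only the pair $(u,w)$ can create a new distance-$2$ conflict through $v$ is a welcome tightening of details the paper leaves implicit, but it is not a different argument.
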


\begin{proof}
%By minimality of $G$, $G-v$ has a $(1,1,2,2)$-coloring $f$.
We may color $v$ with some $x \in \{1_a,1_b\}\setminus\{f(u),f(w)\}$ to obtain a $(1,1,2,2)$-coloring of $G$,  unless $ \{f(u), f(w)\} = \{1_a,1_b\}$ or $f(u)=f(w) \in \{2_a, 2_b\}$.

\textbf{Case 1:} $ \{f(u), f(w)\} = \{1_a,1_b\}$. By symmetry, we assume $f(u)=1_a$ and $f(w)=1_b$. We have $1_b \in \{f(u_1), f(u_2)\}$ since otherwise we can recolor $u$ with $1_b$ and color $v$ with $1_a$ to obtain a $(1,1,2,2)$-coloring of $G$. Similarly, we have $1_a \in \{f(w_1), f(w_2)\}$. Moreover, if $\{2_a, 2_b\} \nsubseteq f(N_G^2(v))$ then we can color $v$ with a color $x \in f(N_G^2(v)) \setminus \{2_a, 2_b\}$ to obtain a $(1,1,2,2)$-coloring of $G$. Thus, $\{2_a, 2_b\} \subseteq f(N_G^2(v))$. % and we may by symmetry assume $\{f(u_1),f(u_2)\}=\{1_b,2_b\}$ and $\{f(w_1),f(w_2)\}=\{1_a,2_a\}$.

\textbf{Case 2:} $f(u)=f(w) \in \{2_a, 2_b\}$. If $\{f(u_1), f(u_2)\} \neq \{1_a, 1_b\}$, then we recolor $u$ with some $x \in \{1_a, 1_b\}\setminus \{f(u_1), f(u_2)\}$ and color $v$ with $y \in \{1_a, 1_b\} \setminus \{x\}$ to obtain a $(1,1,2,2)$-coloring of $G$. Thus, we have $\{f(u_1),f(u_2)\}=\{1_a,1_b\}$ and similarly $\{f(w_1),f(w_2)\}=\{1_a,1_b\}$.
\end{proof}

By symmetry, whenever the situation in Lemma~\ref{tool1} happens, we may assume $f(u) = 1_a$, $f(w) = 1_b$, $\{f(w_1), f(w_2)\} = \{1_a, 2_a\}$ and $\{f(u_1), f(u_2)\} = \{1_b, 2_b\}$ in the former case and $f(u) = f(w) = 2_a$ in the latter case.

\begin{lemma}\label{2-neighbor}
Each $3$-vertex in $G$ has at most one $2$-neighbor.
\end{lemma}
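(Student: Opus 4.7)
The plan is to suppose for contradiction that some $3$-vertex $u$ of $G$ has two $2$-neighbors $v$ and $w$, and to derive a $(1,1,2,2)$-coloring of $G$. Write $N(u)=\{v,w,x\}$, $N(v)=\{u,v'\}$, and $N(w)=\{u,w'\}$. By Lemmas~\ref{minimum} and~\ref{2_adj}, $v$ and $w$ are non-adjacent and both $v'$ and $w'$ are $3$-vertices, so write $N(v')=\{v,v'_1,v'_2\}$ and $N(w')=\{w,w'_1,w'_2\}$. By the minimality of $G$, $G-v$ admits a $(1,1,2,2)$-coloring $f$, and I apply Lemma~\ref{tool1} at the $2$-vertex $v$ with neighbors $u,v'$.

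If the latter conclusion $f(u)=f(v')\in\{2_a,2_b\}$ holds, then neither $u$ nor $v'$ uses a $1$-color, so setting $f(v)=1_a$ already extends $f$ to a $(1,1,2,2)$-coloring of $G$, a contradiction. Hence $\{f(u),f(v')\}=\{1_a,1_b\}$, and by the symmetry remark following Lemma~\ref{tool1} I may assume
\[
f(u)=1_a,\quad f(v')=1_b,\quad \{f(w),f(x)\}=\{1_b,2_b\},\quad \{f(v'_1),f(v'_2)\}=\{1_a,2_a\}.
\]
Under this configuration every color is forbidden at $v$: the $1$-colors by $u$ and $v'$, color $2_a$ by the $v'_i$ with color $2_a$, and $2_b$ by whichever of $w,x$ carries color $2_b$. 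To reach a contradiction I must produce a modification of $f$ on $V(G)\setminus\{v\}$ that liberates one color for $v$.

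I would then split on $f(w)$. When $f(w)=2_b$ (so $f(x)=1_b$), the goal is to free $2_b$ at $v$ by recoloring $w$: set $w\mapsto 1_b$ if $f(w')\ne 1_b$; set $w\mapsto 2_a$ if $f(w')=1_b$ and $2_a\notin\{f(w'_1),f(w'_2)\}$; and in the remaining sub-case $f(w')=1_b$ and $2_a\in\{f(w'_1),f(w'_2)\}$, first recolor $w'$ (the presence of a $2_a$-neighbor among $w'_1,w'_2$ provides the room to send $w'$ to a suitable $1$-color) and then $w$. In each branch, after the modification $v$ is colored $2_b$. When $f(w)=1_b$ (so $f(x)=2_b$), I proceed symmetrically on the $v'$-side: try $v'\mapsto 2_b$, which is valid provided no neighbor of $v'_1$ or $v'_2$ other than $v'$ carries color $2_b$, and it then frees $1_b$ at $v$; otherwise perform an analogous two-step recoloring through $v'_i$, exploiting $\{f(v'_1),f(v'_2)\}=\{1_a,2_a\}$ together with $f(x)=2_b$ to control the remaining constraints.

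The principal obstacle will be the terminal sub-cases in which every single-vertex recoloring of a boundary vertex in $\{w,w',v',v'_1,v'_2,x\}$ violates either an adjacency constraint (for a $1$-color) or a distance-$2$ constraint (for a $2$-color). The subcubic hypothesis is essential here: every vertex has at most three neighbors, so the forbidden sets stay small, and I expect that a two-step recoloring chain---first at a vertex two hops from $v$, then at the boundary vertex---succeeds in each remaining configuration. Verifying this cleanly requires only careful bookkeeping of the four-color forbidden sets at each intermediate step, but invokes no tools beyond Lemmas~\ref{minimum},~\ref{2_adj}, and~\ref{tool1}.
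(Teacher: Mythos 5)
There is a genuine gap, and it occurs at the very first step of your case analysis. When Lemma~\ref{tool1} returns the alternative $f(u)=f(v')\in\{2_a,2_b\}$, say both equal $2_a$, you claim that coloring $v$ with $1_a$ immediately extends $f$. But the obstruction in that alternative is not at $v$: it is that $u$ and $v'$ are at distance $2$ in $G$ (via the path $u\,v\,v'$), so once $v$ is restored to the graph, the two vertices colored $2_a$ violate the distance-at-least-$3$ requirement for that color class no matter what color $v$ receives. This is exactly why Lemma~\ref{tool1} lists $f(u)=f(w)\in\{2_a,2_b\}$ as a non-extendable outcome and why its own proof recolors $u$ to a $1$-color in that situation. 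In the paper's proof of the present lemma this is Case~2, and it is not dispatched in one line: one must recolor the central $3$-vertex away from $2_a$, and showing that this is possible crucially uses the \emph{second} $2$-neighbor (recolor $u_3$, then possibly $v_3$, and finally derive a contradiction from the fact that the two remaining neighbors of a $3$-vertex cannot carry three distinct colors). Your proposal never touches the second $2$-neighbor in this branch, so the argument cannot be recovered along the lines you describe.

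Beyond that, your Case~1 is only a plan: the ``terminal sub-cases in which every single-vertex recoloring of a boundary vertex fails'' are precisely where the content of the lemma lies. The paper resolves them not by a generic two-step recoloring chain but by propagating forced colors out to the third neighborhood of the deleted vertex and terminating with counting contradictions of the form $\{1_b,2_a,2_b\}\subseteq\{f(v_2'),f(v_2'')\}$; your own hardest sub-case (where $f(w')=1_b$ and $\{f(w'_1),f(w'_2)\}$ blocks both the $1_a$ and $2_a$ options) is left to an unsubstantiated expectation. As written, the proposal neither handles the $2$-color alternative correctly nor completes the $1$-color alternative.
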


\begin{proof}
Suppose not, i.e., $u_2$ is a $3$-vertex in $G$ with $N_G^1(u_2)=\{u_1,v_2,u_3\}$ and $d(u_1)=d(u_3)=2$. Let $v_i$ be the neighbors of $u_i$ distinct from $u_2$ for each $i\in\{1,3\}$.  For each $i\in[3]$, let $N_G^1(v_i)=\{u_i,v_i'\}$ if $d(v_i)=2$ and $N_G^1(v_i)=\{u_i,v_i',v_i''\}$ if $d(v_i)=3$. By Lemma~\ref{tool1}, $G-u_1$ has a $(1,1,2,2)$-coloring $f$ such that either $f(v_1)=1_a, f(u_2)=1_b$ or $f(v_1)=f(u_2)=2_a$.

\textbf{Case 1:} $f(v_1)=1_a, f(u_2)=1_b$. By symmetry, we have $\{f(v_1'),f(v_1'')\}=\{1_b,2_b\}$ and $\{f(v_2),f(u_3)\}=\{1_a,2_a\}$.

\textbf{Case 1.1:} $f(v_2)=1_a$ and $f(u_3)=2_a$. If $f(v_3) \neq 1_a$, then we can recolor $u_3$ with $1_a$ and color $u_1$ with $2_a$ to obtain a $(1,1,2,2)$-coloring of $G$, which is a contradiction. Thus, $f(v_3) = 1_a$ and we recolor $u_3$ with $1_b$. If $1_b \notin \{f(v_2'), f(v_2'')\}$, then we recolor $v_2$ with $1_b$, $u_2$ with $1_a$ and color $u_1$ with $1_b$ to obtain a $(1,1,2,2)$-coloring of $G$. Thus, $1_b \in \{f(v_2'), f(v_2'')\}$. If $\{2_a, 2_b\} \nsubseteq \{f(v_2'), f(v_2'')\}$, then we obtain a $(1,1,2,2)$-coloring of $G$ by recoloring $u_2$ with a color $x \in \{2_a, 2_b\} \setminus \{f(v_2'), f(v_2'')\}$ and coloring $u_1$ with $1_b$. Thus, $\{1_b,2_a,2_b\} \subseteq \{f(v_2'),f(v_2'')\}$, which is a contradiction.

\textbf{Case 1.2:} $f(v_2)=2_a$ and $f(u_3)=1_a$. If $f(v_3) \neq 1_b$, then we can recolor $u_3$ with $1_b$, $u_2$ with $1_a$ and color $u_1$ with $1_b$ to obtain a $(1,1,2,2)$-coloring of $G$, which is a contradiction. Thus, $f(v_3) = 1_b$. If $\{1_a, 1_b\} \nsubseteq \{f(v_2'), f(v_2'')\}$, then we obtain a $(1,1,2,2)$-coloring of $G$ by recoloring $v_2$ with a color $x \in \{1_a, 1_b\} \setminus \{f(v_2'), f(v_2'')\}$, $u_2$ with $2_a$ and color $u_1$ with $1_b$. Thus, $\{1_a, 1_b\} \subseteq \{f(v_2'), f(v_2'')\}$. It follows that $2_b \notin \{f(v_2'), f(v_2'')\}$,  and we obtain a $(1,1,2,2)$-coloring of $G$ by recoloring $u_2$ with $2_b$ and coloring $u_1$ with $1_b$, which is a contradiction.

\textbf{Case 2:} $f(v_1)=f(u_2)=2_a$. By symmetry, $f(v_1')=1_a,f(v_1'')=1_b,f(v_2)=1_a, f(u_3)=1_b$. If $f(v_3) \neq 1_a$, then we recolor $u_3$ with $1_a$, $u_2$ with $1_b$ and color $u_1$ with $1_a$. Thus, $f(v_3) = 1_a$. If $1_b \notin \{f(v_3'), f(v_3'')\}$, then we recolor $v_3$ with $1_b$, $u_3$ with $1_a$, $u_2$ with $1_b$ and color $u_1$ with $1_a$. Thus, $1_b \in \{f(v_3'), f(v_3'')\}$. If $\{2_a, 2_b\} \nsubseteq \{f(v_3'), f(v_3'')\}$, then we recolor $u_3$ by a color $x \in \{2_a, 2_b\} \setminus \{f(v_3'), f(v_3'')\}$, $u_2$ with $1_b$ and color $u_1$ with $1_a$ to obtain a $(1,1,2,2)$-coloring of $G$. Therefore, $\{1_b,2_a,2_b\}\subseteq\{f(v_3'),f(v_3'')\}$, which is a contradiction.
\end{proof}

For convenience, call a $3$-vertex $v$ in $G$ {\em special} if all neighbors of $v$ are $3$-vertices.

\begin{lemma}\label{special-3-vertex}
Let $u$ be a $2$-vertex in $G$, then there are at least two special $3$-vertices in $N_G^2(u)$.
\end{lemma}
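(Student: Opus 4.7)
Write $N_G^1(u)=\{v_1,v_2\}$. By Lemma~\ref{2_adj} both $v_1$ and $v_2$ are $3$-vertices, and by Lemma~\ref{2-neighbor} every neighbor of each $v_i$ other than $u$ is a $3$-vertex; call them $v_{i1},v_{i2}$. Thus $N_G^2(u)\subseteq\{v_{11},v_{12},v_{21},v_{22}\}$. Suppose for a contradiction that at most one of these four grandchildren is special. Then at least three of them are non-special; each such $v_{ij}$ has a unique $2$-neighbor $v_{ij}^{\ast}\neq v_i$ (unique by Lemma~\ref{2-neighbor}), and both pairs $\{v_{11},v_{12}\}$ and $\{v_{21},v_{22}\}$ contain a non-special vertex.

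\textbf{Structure of a coloring of $G-u$.} By the minimality of $G$, the graph $G-u$ admits a $(1,1,2,2)$-coloring $f$, and Lemma~\ref{tool1} applied with $u$ in the role of the $2$-vertex places $f$, after a relabeling of colors as explained in the remark following Lemma~\ref{tool1}, in one of two normal forms. In Case~(A), $f(v_1)=1_a$, $f(v_2)=1_b$, $\{f(v_{11}),f(v_{12})\}=\{1_b,2_b\}$, and $\{f(v_{21}),f(v_{22})\}=\{1_a,2_a\}$. In Case~(B), $f(v_1)=f(v_2)=2_a$ and $\{f(v_{i1}),f(v_{i2})\}=\{1_a,1_b\}$ for $i\in\{1,2\}$. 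The aim is to show that, in either case, $f$ can be modified locally into a coloring that extends to $u$, contradicting minimality.

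\textbf{Recoloring via the chain $v_{ij}^{\ast}\to v_{ij}\to v_i\to u$.} Following the pattern of Lemmas~\ref{2_adj} and \ref{2-neighbor}, the plan is to push a forced color change from a $2$-vertex $v_{ij}^{\ast}$ back toward $u$. Select a non-special grandchild $v_{ij}$; by minimality $G-v_{ij}^{\ast}$ has a $(1,1,2,2)$-coloring and Lemma~\ref{tool1} applied at $v_{ij}^{\ast}$ constrains the colors in its $2$-neighborhood, which in turn provides a color to which $f(v_{ij})$ may be switched without violating any packing constraint. In Case~(A) we choose a non-special $v_{ij}$ whose color lies in $\{2_a,2_b\}$; after the swap the displaced color becomes available at $u$. (If on one side the `$1$-type'-colored grandchild is the non-special one while the `$2$-type' one is special, a preliminary interchange of the two colors on that side, compatible with $f(v_i)$, brings us into the desired configuration.) In Case~(B) no grandchild carries a `$2$-type' color, so we first lift $f(v_{ij})$ to $2_b$ for a non-special $v_{ij}$, after which a color in $\{1_a,1_b\}$ becomes available at $v_i$ and then at $u$. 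Lemma~\ref{tool1} at $v_{ij}^{\ast}$ controls what may appear within distance $2$ of $v_{ij}^{\ast}$, guaranteeing that no swap creates a distance-$1$ or distance-$2$ conflict with vertices outside the chain.

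\textbf{Main obstacle.} The chief difficulty is the case analysis. Cases~(A) and (B) each split into several subcases according to which of the (at most four) grandchildren are non-special, which of them carries the color to be displaced, and what coincidences occur—for instance two of the $v_{ij}$ may be identified, or $v_{ij}^{\ast}$ may lie within distance $2$ of $u$ or of another $v_{i'j'}^{\ast}$, which could force additional $2$-vertices into $N_G^2(u)$ whose colors must be compatible with every swap. Verifying that each local recoloring respects both the distance-$1$ rule for colors $1_a,1_b$ and the distance-$2$ rule for $2_a,2_b$ is routine but bookkeeping-intensive, and closely parallels the subcase management already carried out in Lemma~\ref{2-neighbor}.
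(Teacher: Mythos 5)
Your setup and the invocation of Lemma~\ref{tool1} at $u$ match the paper, but the core of the argument is missing, and the mechanism you propose to replace it does not work as stated.

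First, you cannot apply Lemma~\ref{tool1} ``at $v_{ij}^{\ast}$'' to constrain the coloring $f$. Lemma~\ref{tool1} is a statement about colorings of $G$ minus a $2$-vertex; your $f$ is a coloring of $G-u$, in which $v_{ij}^{\ast}$ is present and already colored. A coloring of $G-v_{ij}^{\ast}$ is a different object, and its structure tells you nothing about $f$. What the paper actually exploits is much more elementary: since $w_1$ (the $2$-neighbor of a non-special grandchild) has only one neighbor besides $v_1$, it can often be recolored with a color in $\{1_a,1_b\}$ avoiding that single neighbor, and this freedom is propagated back through $v_1$ and $u_1$ to $u$. Your ``chain'' idea points in that direction, but the claim that ``no swap creates a distance-$1$ or distance-$2$ conflict with vertices outside the chain'' is exactly what must be proved, and it is false for arbitrary swaps; the paper needs roughly two pages of forced-coloring deductions (e.g., pinning down $\{f(w_1),f(w_2)\}=\{1_a,2_b\}$ and $\{f(w_3),f(w_4)\}=\{1_b,2_a\}$ in its Case 1, and then further constraints on the second neighborhoods $x_1,\dots,x_6$) before any extension to $u$ succeeds. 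Declaring this ``routine but bookkeeping-intensive'' leaves the entire substance of the lemma unproven.

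Second, your choice of which side to work on is weaker than the paper's and creates an unjustified step. From ``at most one special grandchild'' the paper concludes that on some side \emph{both} grandchildren are non-special, and it runs the whole argument on that side. You only record that each side contains a non-special grandchild, and then patch the mismatch with a ``preliminary interchange of the two colors on that side.'' Swapping the colors of $v_{i1}$ and $v_{i2}$ (say $1_b$ and $2_b$) is not generally legal: it requires verifying the distance-$1$ and distance-$2$ constraints in the neighborhoods of both vertices, and nothing in your setup guarantees this. You should instead fix the side on which both grandchildren are non-special, which is what makes the recoloring chains available on both branches simultaneously (the paper's Cases 1.1/1.2 and 2.1/2.2 rely on having the two $2$-vertices $w_1$ and $w_3$ at its disposal).
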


\begin{proof}
Suppose not, i.e., there are at most one special $3$-vertices in $N_G^2(u)$. Let
$N_G^1(u)=\{u_1,u_2\}$. By Lemma~\ref{2_adj}, both $u_1$ and $u_2$ are $3$-vertices. Let
$N_G^1(u_1)=\{u,v_1,v_2\}$ and $N_G^1(u_2)=\{u,v_3,v_4\}$. By Lemma~\ref{2-neighbor},
$d(v_i)=3$ for each $i \in [4]$ and we may assume by symmetry that both $v_1$ and $v_2$ are
non-special. By Lemma~\ref{2-neighbor} again, $v_1$ (respectively $v_2$) has exactly one $2$-neighbor, say
$w_1$ (respectively $w_3$). Let $N_G^1(v_1)=\{u_1,w_1,w_2\}$, $N_G^1(v_2)=\{u_1,w_3,w_4\}$,
$N_G^1(w_1)=\{v_1,x_1\}$, $N_G^1(w_2)=\{v_1,x_2,x_3\}$, $N_G^1(w_3)=\{v_2,x_4\}$ and $N_G^1(w_4)=\{v_2,x_5,x_6\}$ (note that it is possible that $v_1v_2\in E(G)$). By Lemma~\ref{tool1}, $G-u$ has a $(1,1,2,2)$-coloring $f$ such that either $f(u_1)=1_a,f(u_2)=1_b$ or $f(u_1)=f(u_2)=2_a$.  % By Lemma~\ref{tool2}, $v_1v_2\notin E(G)$.

\medskip

\textbf{Case 1:} $f(u_1)=1_a$ and $f(u_2)=1_b$. By symmetry, $f(v_1)=1_b,f(v_2)=2_b,f(v_3)=1_a$ and $f(v_4)=2_a$.

\

\textbf{Claim:}
 $\{f(w_1),f(w_2)\}=\{1_a,2_b\}$ and $\{f(w_3),f(w_4)\}=\{1_b,2_a\}$.

\textbf{Proof of Claim:}
If $1_a \notin \{f(w_1), f(w_2)\}$, then we recolor $v_1$ with $1_a$, $u_1$ with $1_b$ and color $u$ with $1_a$ to obtain a $(1,1,2,2)$-coloring of $G$. Thus, $1_a \in \{f(w_1),f(w_2)\}$. If $1_b \notin \{f(w_3), f(w_4)\}$, then we recolor $v_2$ with $1_b$ and color $u$ with $2_b$. Thus, $1_b\in \{f(w_3),f(w_4)\}$. If $2_a \notin f(N_G^2(u_1))$, then we can recolor $u_1$ with $2_a$ and color
$u$ with $1_a$. Thus, $2_a \in \{f(w_1),f(w_2),f(w_3),f(w_4)\}$. Now we may assume that $2_b\notin \{f(w_1),f(w_2)\}$, since otherwise we have $\{f(w_1),f(w_2)\}=\{1_a,2_b\}$ and $ \{f(w_3),f(w_4)\}=\{1_b,2_a\}$ (and we are done). Then $1_a \in \{f(w_3),f(w_4)\}$, since otherwise we can recolor $v_2$ with $1_a$, $u_1$ with $2_b$ and color $u$ with $1_a$. By symmetry, we assume that $f(w_3)=1_a$, $f(w_4)=1_b$ and we also have $\{f(w_1),f(w_2)\}=\{1_a,2_a\}$.

If $f(x_4) \neq 1_b$ or $2_a \notin f(N_G^2(w_3))$, then we recolor $w_3$ with $1_b$ or $2_a$, color $v_2$ with $1_a$, $u_1$ with $2_b$ and $u$ with $1_a$ to obtain a $(1,1,2,2)$-coloring of $G$. Thus, $f(x_4)=1_b$ and $f(N_G^1(x_4)-\{w_3\}) = \{1_a, 2_a\}$, since if $1_a \notin f(N^1_G(x_4) - \{w_3\})$ then we recolor $x_4$ with $1_a$ and it contradicts our previous conclusion that $f(x_4) = 1_b$.

\textbf{Case a:} $f(w_1)=1_a$ and $f(w_2)=2_a$. Then $f(x_1)=1_b$, since otherwise we can recolor $w_1$ with $1_b$, $v_1$ with $1_a$, $u_1$ with $1_b$ and color $u$ with $1_a$ to obtain a $(1,1,2,2)$-coloring of $G$. If $\{1_a,1_b\} \neq \{f(x_2),f(x_3)\}$, then we can recolor $w_2$ with a color $x \in \{1_a, 1_b\}\setminus \{f(x_2),f(x_3)\}$, $v_1$ with $2_a$, $u_1$ with $1_b$ and color $u$ with $1_a$, which is a contradiction. Thus, $ \{f(x_2),f(x_3)\} = \{1_a,1_b\}$. Now we can recolor $v_1$ and $w_3$ with $2_b$, $v_2$ with $1_a$, $u_1$ with $1_b$ and color $u$ with $1_a$, which is a contradiction.

\textbf{Case b:} $f(w_1)=2_a,f(w_2)=1_a$. Then $f(x_1)=1_a$, since otherwise we can recolor $w_1$ with $1_a$, $u_1$ with $2_a$ and color $u$ with $1_a$ to obtain a $(1,1,2,2)$-coloring of $G$. If $1_b \notin \{f(x_2),f(x_3)\}$, then we can recolor $w_2$ with $1_b$, $v_1$ with $1_a$, $u_1$ with $1_b$ and color $u$ with $1_a$. If $2_b \notin \{f(x_2),f(x_3)\}$, then we can recolor $v_1$ and $w_3$ with $2_b$, $v_2$ with $1_a$, $u_1$ with $1_b$ and color $u$ with $1_a$. Thus, we have $\{f(x_2),f(x_3)\}=\{1_b,2_b\}$. Now we can recolor $w_1$ with $1_b$, $v_1$ with $2_a$, $u_1$ with $1_b$ and color $u$ with $1_a$, which is a contradiction. 

This completes the proof of the Claim. 

\

By the Claim, we have the following two subcases.

\textbf{Case 1.1:} $f(w_1)=1_a$ and $f(w_2)=2_b$. Then $f(x_1)=1_b$, since otherwise we can recolor $w_1$ with $1_b$, $v_1$
with $1_a$, $u_1$ with $1_b$ and color $u$ with $1_a$ to obtain a $(1,1,2,2)$-coloring of $G$.
Moreover, $\{f(x_2),f(x_3)\}=\{1_a,1_b\}$, since otherwise we can recolor $w_2$ with $1_a$ or $1_b$, $v_1$ with $2_b$,
$v_2$ with $1_a$, $u_1$ with $1_b$ and color $u$ with $1_a$. Now we can recolor $v_1$ with $2_a$, $u_1$ with $1_b$ and color $u$
with $1_a$, which is a contradiction.

\textbf{Case 1.2:} $f(w_1)=2_b$ and $f(w_2)=1_a$. Then $1_b\in\{f(x_2),f(x_3)\}$, since otherwise we can recolor $w_2$ with $1_b$, $v_1$ with $1_a$, $u_1$ with $1_b$ and color $u$ with $1_a$ to obtain a $(1,1,2,2)$-coloring of $G$. Also $2_b\in\{f(x_2),f(x_3)\}$, since otherwise we can recolor $w_1$ with a color $x \in \{1_a,1_b\}\setminus\{f(x_1)\}$, $v_1$ with $2_b$, $v_2$ with $1_a$, $u_1$ with $1_b$ and color $u$ with $1_a$. Note that $f(x_1)=2_a$, for otherwise we can recolor $v_1$ with $2_a$, $u_1$ with $1_b$
and color $u$ with $1_a$. Now we can recolor $w_1$ and $v_2$ with $1_a$, $u_1$ with $2_b$ and color $u$
with $1_a$, which is a contradiction.

\medskip

\textbf{Case 2:} $f(u_1)=f(u_2)=2_a$. By symmetry, $f(v_1)=1_a,f(v_2)=1_b,f(v_3)=1_a,f(v_4)=1_b$. If $1_b \notin \{f(w_1), f(w_2)\}$, then we recolor $v_1$ with $1_b$, $u_1$ with $1_a$ and color $u$ with $1_b$ to obtain a $(1,1,2,2)$-coloring of $G$. Thus, $1_b\in
\{f(w_1),f(w_2)\}$. Similarly, $1_a\in\{f(w_3),f(w_4)\}$. If $2_b \notin f(N_G^2(u_1))$, then we recolor $u_1$ with $2_b$ and color $u$ with $1_a$. Therefore, $2_b \in\{f(w_1),f(w_2),f(w_3),f(w_4)\}$.

\textbf{Case 2.1:} $f(w_2)=1_b$, $f(w_4)=1_a$.

\textbf{Case 2.1.1:} $f(w_3) \neq 2_b$. Then $f(w_3) = 1_a$ and $f(w_1) = 2_b$. If $2_b\notin\{f(x_2),f(x_3)\}$, then we can recolor $w_1$ with a color $x \in \{1_a,1_b\} \setminus \{f(x_1)\}$, $v_1$ with $2_b$, $u_1$ with $1_a$ and color $u$ with $1_b$. Thus, $2_b \in \{f(x_2),f(x_3)\}$. If $1_a\notin\{f(x_2),f(x_3)\}$, then we can recolor $w_2$ with $1_a$, $v_1$ with $1_b$, $u_1$ with $1_a$ and color $u$ with $1_b$. Therefore, $\{f(x_2),f(x_3)\}=\{1_a, 2_b\}$. Then $f(x_1)=2_a$, for otherwise we can recolor $v_1$ with $2_a$, $u_1$ with $1_a$ and color $u$ with $1_b$. We now recolor $w_1$ with $1_b$. Then we obtain a $(1,1,2,2)$-coloring of $G$ by recoloring $u_1$ with $2_b$ and coloring $u$ with $1_a$ or $1_b$, which is a contradiction.

\textbf{Case 2.1.2:} $f(w_1) \neq 2_b$. Then $f(w_1) = 1_b$ and $f(w_3) = 2_b$. Similarly to Case 2.1.1, we can recolor $w_3$ with $1_a$. Then we obtain a $(1,1,2,2)$-coloring of $G$ by recoloring $u_1$ with $2_b$ and coloring $u$ with $1_a$ or $1_b$, which is a contradiction.

\textbf{Case 2.1.3:} $f(w_1)=f(w_3)=2_b$. Similarly to Case 2.1.1, we can recolor $w_1$ with $1_b$ and $w_3$ with $1_a$. Then we obtain a $(1,1,2,2)$-coloring of $G$ by recoloring $u_1$ with $2_b$ and coloring $u$ with $1_a$ or $1_b$, which is a contradiction.

\textbf{Case 2.2:} $f(w_2) \neq 1_b$ or $f(w_4) \neq 1_a$. By symmetry, we may assume that $f(w_2)=2_b$ and $f(w_1)=1_b$. Then $f(x_1)=1_a$, for otherwise we can recolor $w_1$ with $1_a$, $v_1$ with $1_b$, $u_1$ with $1_a$, and color $u$ with
$1_b$ to obtain a $(1,1,2,2)$-coloring of $G$. If $\{f(x_2), f(x_3)\} \neq \{1_a, 1_b\}$, then $w_2$ can be recolored with $x \in \{1_a, 1_b\} \setminus \{f(x_2), f(x_3)\}$, $v_1$ with $2_b$, $u_1$ with $1_a$, and color $u$ with $1_b$. Therefore, $\{f(x_2),f(x_3)\} = \{1_a, 1_b\}$. We now recolor $v_1$ with
$2_a$, $u_1$ with $1_a$, and color $u$ with $1_b$, which is a contradiction.
\end{proof}

%\section{Proof of Theorem~\ref{mad2.75}}

We are now ready to complete the proof of Theorem~\ref{mad2.75}. We use a discharging argument.   Let the initial charge $\mu(v)=d(v) - \frac{30}{11}$ for each $v\in V(G)$.  Since $mad(G)<\frac{30}{11}$, we have
%\begin{equation}\label{initialcharge}
$$\sum\limits_{v \in V(G)} (d(v) - \frac{30}{11}) = 2|E(G)| - n \cdot \frac{30}{11} \le mad(G) \cdot n - \frac{30}{11} \cdot n < 0.$$
%\end{equation}
To lead to a contradiction, we shall use the following discharging rules to redistribute the charges so that the final charge of every vertex $v$ in $G$, denote by $\mu^*(v)$, is non-negative.
%The discharging rules are as follows.
\begin{enumerate}[(R1)]
\item Each special $3$-vertex $v$ gives $\frac{1}{11}$ to each $2$-vertex in $N_G^2(v)$.

\item Each non-special $3$-vertex $v$ gives $\frac{3}{11}$ to each $2$-neighbor.
\end{enumerate}
%but $x$ $2$-vertices at distance $2$, where $1 \le x \le 2$, give $\frac{1}{4x}$ of its charge to each $2$-vertex at distance $2$.

%\textbf{R3:} We keep the charge unchanged for each $3$-vertex with no $2$-vertex within distance $2$.

Let $v$ be a vertex in $G$. By Lemma~\ref{minimum}, $d(v)\in \{2,3\}$. If $d(v)=2$, then by Lemma~\ref{2_adj} and (R2) $v$ gets $\frac{3}{11}$ from each of two $3$-neighbors. By Lemma~\ref{special-3-vertex} there are at least two special $3$-vertices in $N_G^2(v)$ and each of which gives $\frac{1}{11}$ to $v$ by (R1). So $\mu^*(v)\ge 2-\frac{30}{11}+\frac{3}{11}\cdot2+\frac{1}{11}\cdot2=0$. Let $d(v)=3$. If $v$ is not special, then by Lemma~\ref{2-neighbor}, $v$ has exactly one $2$-neighbor, so gives $\frac{3}{11}$ by (R2); if $v$ is special, then $v$ has at most three $2$-vertices in $N_G^2(v)$ by Lemma~\ref{2-neighbor}, so by (R1), $v$ gives $\frac{1}{11}\cdot 3$. So in either case, $\mu^*(v)\ge 3-\frac{30}{11}-\max\{\frac{3}{11},\frac{1}{11}\cdot3\}=0$.

%%%%%%%%%%%%%%%%%%%%%%%%%%%%%%%%%%%%%%%%%%%


\begin{thebibliography}{2}
\bibitem{ANT1}
G. Argiroffo, G. Nasini and P. Torres, The packing coloring problem for lobsters and
partner limited graphs, {\em Discrete Appl. Math.} 164 (2014), 373--382.


\bibitem{BKL}
J. Balogh, A. Kostochka and X. Liu,
Packing chromatic number of cubic graphs,
{\em Discrete Math.}  341 (2018),  474--483.

\bibitem{BKL2}
J. Balogh, A. Kostochka and X. Liu, Packing Chromatic Number of Subdivisions of Cubic Graphs, {\em Graphs Combin.} 35(2) (2019), 513--537.

\bibitem{BI}

O. Borodin and A. Ivanova, 2-Distance 4-coloring of planar subcubic graphs, {\em Journal of Applied and Industrial Mathematics} 5(4) (2011), 535--541.

\bibitem{BF}
B. Bre\v sar and J. Ferme,
An infinite family of subcubic graphs with
unbounded packing chromatic number, {\em Discrete Math.}  341 (2018), 2337--2343.


\bibitem{BKR1} B. Bre\v sar, S. Klav\v zar and D.F. Rall,
 On the packing chromatic number of Cartesian products, hexagonal lattice, and trees,
{\em Discrete Appl. Math.} 155 (2007), 2302--2311.

\bibitem{BKR2} B. Bre\v sar, S. Klav\v zar and D.F. Rall,
Packing chromatic number of base-3 Sierpi\' nski
graphs, {\em Graphs Combin.} 32 (2016), 1313--1327.


\bibitem{BKRW1} B. Bre\v sar, S. Klav\v zar, D.F. Rall and K. Wash,
Packing chromatic number under local changes in a graph, {\em Discrete Math.} 340 (2017), 1110--1115.


\bibitem{BKRW2} B. Bre\v sar, S. Klav\v zar, D.F. Rall and K. Wash,
Packing chromatic number, (1,1,2,2)-colorings, and characterizing the Petersen graph,
{\em Aequationes Math.} 91 (2017), 169--184.

\bibitem{BKRW3} B. Bre\v sar, S. Klav\v zar, D.F. Rall and K. Wash,
Packing chromatic number versus chromatic and clique number, {\em Aequationes mathematicae} 92 (3) (2018), 497--513.

\bibitem{BGT} B. Bre\v sar, N. Gastineau and O. Togni, Packing colorings of subcubic outerplanar graphs, https://arxiv.org/abs/1809.05552.

\bibitem{CJ1}
J. Czap and S. Jendrol',  Facial packing edge-coloring of plane graphs, {\em Discrete Appl. Math.} 213 (2016), 71--75.

\bibitem{FG1}
J. Fiala and P.A. Golovach,  Complexity of the packing coloring problem for trees, {\em Discrete Appl. Math.} 158 (2010),  771--778.

\bibitem{FKL1}
J. Fiala, S. Klav\v zar and B. Lidick\' y, The packing chromatic number of infinite product
graphs, {\em European J. Combin.} 30 (2009), 1101--1113.

\bibitem{G1}
N. Gastineau, Dichotomies properties on computational complexity of S-packing coloring problems,  {\em Discrete Math.} 338 (2015), 1029--1041.

\bibitem{GT1}
N. Gastineau and O. Togni,  S-packing colorings of cubic graphs, {\em Discrete Math.} 339 (2016),  2461--2470.


\bibitem{GHT}
N. Gastineau, P. Holub and O. Togni,
On packing chromatic number of subcubic outerplanar graphs, {\em Discrete Appl. Math.} 255 (2019), 209--221.

\bibitem{GKT} N. Gastineau, H. Kheddouci and O. Togni,
Subdivision into $i$-packing and $S$-packing chromatic number of some lattices, {\em Ars Math. Contemp.} 9 (2015), 331--354.

\bibitem{GHHHR}
W. Goddard, S.M. Hedetniemi, S.T.Hedetniemi, J.M. Harris and D.F. Rall, Broadcast chromatic numbers of graphs,
{\em Ars Combin.} 86 (2008), 33--49.

\bibitem{GX1}
W. Goddard and H. Xu,  The S-packing chromatic number of a graph,
{\em Discuss. Math. Graph Theory} 32 (2012),  795--806.


\bibitem{GX2}
W. Goddard and H. Xu, A note on S-packing colorings of lattices, {\em Discrete Appl. Math.} 166 (2014), 255--262.


\bibitem{KV1}
D. Kor\v ze and A. Vesel, On the packing chromatic number of square and hexagonal
lattice, {\em Ars Math. Contemp.} 7 (2014), 13--22.


\bibitem{LBS2}
D. Laiche, I. Bouchemakh and \' E. Sopena,  On the packing coloring of undirected and oriented generalized theta graphs,
{\em Australas. J. Combin.} 66 (2016), 310--329.


\bibitem{LBS1}
D. Laiche, I. Bouchemakh and \' E. Sopena, Packing coloring of
some undirected and oriented coronae graphs, {\em  Discuss. Math. Graph Theory}, 37 (3) (2017), 665--690.


\bibitem{S1}
C. Sloper, An eccentric coloring of trees, {\em Austral. J. Combin.} 29 (2004), 309--321.

\bibitem{TV1}
P. Torres and M. Valencia-Pabon, The packing chromatic number of hypercubes, {\em Discrete Appl. Math.}~190--191 (2015), 127--140.
\end{thebibliography}
\end{document}